\documentclass{amsart} 
\usepackage{amscd}
\usepackage[centertags]{amsmath}
\usepackage{newlfont}
\usepackage{graphicx}
\usepackage[usenames]{color}
\usepackage{amsfonts, amssymb}
\usepackage{mathrsfs}
\usepackage{latexsym}
\usepackage{hyperref}
\hypersetup{
colorlinks=true,
linkcolor=blue,
filecolor=blue,      
urlcolor=blue,
citecolor=blue,
}
\usepackage{verbatim}


\newtheorem{theorem}{Theorem}[section]

\newtheorem{proposition}[theorem]{Proposition}
\newtheorem{corollary}[theorem]{Corollary}
\newtheorem{lemma}[theorem]{Lemma}

\newtheorem{example}[theorem]{Example}

\theoremstyle{remark}
\newtheorem{remark}[theorem]{Remark}

\oddsidemargin   0cm \evensidemargin 0cm \textwidth      16cm
\textheight 23cm \topmargin -1cm



\newcommand{\HH}{\mathcal{H}}

\newcommand{\M}{\mathcal{M}}

\newcommand{\PI}[2]{\left\langle \,#1 , #2\, \right\rangle}

%


\DeclareMathOperator{\Span}{span}


\title{Polyak's theorem on Hilbert spaces}

\author{Maximiliano Contino}
\address{Maximiliano Contino,  Instituto Argentino de Matem\'atica ``Alberto P. Calder\'on'' CONICET \\
Saavedra 15, Piso 3, (1083) Buenos Aires, Argentina  and 
Facultad de Ingenier\'{\i}a, Universidad de Buenos Aires \\
Paseo Col\'on 850 (1063), Buenos Aires, Argentina.}
\email{mcontino@fi.uba.ar}

\author{Guillermina Fongi}
\address{ Guillermina Fongi,  Centro Franco Argentino de Ciencias de la Informaci\'on y de Sistemas  CONICET \\ Ocampo y Esmeralda (2000),  Rosario, Argentina.}
\email{gfongi@conicet.gov.ar}

\author{Santiago Muro}
\address{ Santiago Muro,  Centro Franco Argentino de Ciencias de la Informaci\'on y de Sistemas CONICET \\ Ocampo y Esmeralda  (2000),  Rosario, Argentina. }
\email{muro@cifasis-conicet.gov.ar}

\begin{document}
\keywords{Quadratic forms on Hilbert spaces, convexity, numerical range, S-lemma.}
\subjclass[2010]{
47A07,
47A12,
46C05, 
52A10, 
52A15,
90C20}

\maketitle

\begin{abstract}
We extend to infinite dimensional Hilbert spaces
a celebrated  result, due to B. Polyak, about the convexity of the joint image of quadratic functions. We show sufficient conditions which assure that the joint image  is also closed. 
However, we prove that the closedness part of Polyak's theorem does not hold in general in the infinite dimensional setting.
Finally, we give some applications to S-lemma type results.
\end{abstract}

\section{Introduction}
In \cite{polyak1998convexity}, Polyak extended a well-known theorem of Dines \cite{dines1941mapping}, by providing a convexity property related to non-homogeneous quadratic functions. Consider the functions 
 $$
 \phi_i(x)=\langle A_ix,x\rangle +\langle x,a_i\rangle + b_i,
 $$
where $A_i$ is a $n \times n$ symmetric  matrix, $a_i\in \mathbb R^n$, $b_i\in \mathbb R$ for $i=1,2$. 
Polyak's result \cite{polyak1998convexity} states that if $n\geq 2$ and there exists $(\mu_1,\mu_2)\in \mathbb R^2$ such $\mu_1A_1 +\mu_2A_2> 0$ then the set
$$
\{(\phi_1(x), \phi_2(x)): x\in \mathbb{R}^n\}
$$ is closed and convex. 
Here, the notation $\mu_1A_1 +\mu_2A_2> 0$ means that the matrix $\mu_1A_1 +\mu_2A_2$ is positive definite.
Polyak also proved that the joint image of three homogeneous quadratic forms in $\mathbb R^n$ is a closed and convex cone of $\mathbb R^3$ if and only if there is a positive definite linear combination of the operators determining the three quadratic forms.

 In \cite{baccari2009extension} an extension of Polyak's theorems to quadratic forms defined by compact operators on infinite dimensional separable Hilbert spaces was investigated. However, in \cite[Theorems 2.1 and 2.3]{baccari2009extension}, some compact operators are assumed to be bounded below,
so unfortunately,their main results are only applicable to finite dimensional spaces (see the comments after Corollary \ref{Brickman1bis}).
 Moreover, Example \ref{ejemplo no cerrado} shows that the joint image can be non-closed, even for quadratic functions determined by compact  positive definite operators. This shows  that additional hypothesis must be considered in order to prove the closedness part of Polyak's theorem.

In this work we extend Polyak's convexity result to an arbitrary infinite dimensional Hilbert space $\HH$. Moreover, we show that if $A_1$ is a compact operator on $\HH$ with 0 in its numerical range and $A_2$ is a positive invertible definite operator, then the joint image of two non necessarily homogeneous quadratic forms determined by $A_1$ and $A_2$, is also closed. We finish this work with some applications to S-lemma type results.


\section{Extension of Polyak's results: the homogeneous case}
In this section we prove the convexity of the joint image of three homogeneous quadratic forms on a Hilbert space. Let us first introduce some notations.

Throughout $\HH$ and $\mathcal K$ denote real inner product spaces.  The  range and nullspace of any given mapping $A$ are denoted by $R(A)$ and $N(A),$ respectively. 
Also, $L(\HH, \mathcal K)$ stands for the space of the bounded linear operators defined on $\HH$ to $\mathcal K.$ When $\HH = \mathcal K$ we write, for short, $L(\HH).$ 
Given a linear operator $T$ on $\HH$ (possibly densely defined) we say that $T$ is \emph{positive definite} or $T >0$  if $T$ is \emph{symmetric} (i.e., $\PI{Tx}{y}=\PI{x}{Ty} \mbox{ for every } x, y$ in the domain of $T$) and  
$\PI{Tx}{x} >0 \mbox{ for every } x\ne0$ in the domain of $T$. 
The group of invertible operators in $L(\HH)$ is denoted by $GL(\HH)$ and  $GL(\HH)^+$ denotes the set of positive definite and invertible operators in $L(\HH)$.
For a closed subspace $\M,$ $P_{\M}$ denotes the orthogonal projection onto $\M.$ Finally, $S_\HH$ and $B_\HH$ denote the unit sphere and the open unit ball of $\HH$, respectively.

\medskip

A key tool used in the proof of Polyak's theorems is a result on the joint real numerical range of real symmetric matrices due to Brickman \cite{brickman1961field}. This result can be seen as the real analogue of the classical Toeplitz-Hausdorff Theorem (and implies it, see e.g. \cite{Mcintosh}). Brickman's result was extended to infinite dimensional inner product spaces,
\cite{micchelli1993optimal,martinez2005brickman} (see also \cite[Theorem 2]{hestenes1968pairs}):
\begin{theorem}[Brickman's convexity]\label{Brickman1}
	Let $(\HH, \PI{\cdot}{\cdot})$ be a real inner product space, $3\le dim(\HH)\le \infty$. Let $A_1,A_2$ be (not necessarily bounded) linear endomorphisms on $\HH$. Then the set
	 \begin{align*}
	W_{\mathbb R}(A_1,A_2):=\{(\langle A_1x,x\rangle,\langle A_2x,x\rangle)\in \mathbb R^2\,:\, \|x\|=1\}
	\end{align*}
	is a convex subset of $\mathbb R^2$.
\end{theorem}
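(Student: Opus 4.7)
The plan is to reduce the infinite-dimensional statement to Brickman's original theorem for real symmetric matrices on $\mathbb R^3$ \cite{brickman1961field}. Given two points $(\alpha_1, \alpha_2)$ and $(\beta_1, \beta_2)$ in $W_{\mathbb R}(A_1, A_2)$, realized by unit vectors $x_0$ and $x_1$ in the common domain of $A_1$ and $A_2$, I would choose a third vector $x_2$ in that domain such that $F := \Span\{x_0, x_1, x_2\}$ is three-dimensional; this is possible because $\dim(\HH) \geq 3$. (If $x_1 \in \Span\{x_0\}$, then $(\alpha_1, \alpha_2) = (\beta_1, \beta_2)$ and there is nothing to show.)

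The next step is to pass from the $A_i$ to symmetric endomorphisms of $F$. Since $A_i$ need not preserve $F$, I would not restrict them directly; instead, I would introduce the symmetric bilinear forms
$$
b_i(x, y) := \tfrac{1}{2}\bigl(\langle A_i x, y\rangle + \langle A_i y, x\rangle\bigr), \qquad x, y \in F,
$$
which, on the finite-dimensional $F$, are represented by unique symmetric operators $\tilde A_i : F \to F$ via $\langle \tilde A_i x, y\rangle = b_i(x, y)$. Setting $y = x$ gives $\langle \tilde A_i x, x\rangle = \langle A_i x, x\rangle$ for every $x \in F$, and since the unit sphere of $F$ sits inside $S_\HH$, this yields
$$
W_{\mathbb R}(\tilde A_1, \tilde A_2) \subseteq W_{\mathbb R}(A_1, A_2).
$$

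With this reduction in place, the classical Brickman theorem applied to $\tilde A_1$ and $\tilde A_2$ on $F \cong \mathbb R^3$ gives that $W_{\mathbb R}(\tilde A_1, \tilde A_2)$ is a convex subset of $\mathbb R^2$. Since this convex set contains both $(\alpha_1, \alpha_2)$ and $(\beta_1, \beta_2)$, the segment joining them lies in $W_{\mathbb R}(\tilde A_1, \tilde A_2)$, and therefore in $W_{\mathbb R}(A_1, A_2)$, as desired.

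The most delicate step is really the black-box invocation of Brickman's original theorem in dimension three, whose proof relies on nontrivial topological properties of the $2$-sphere (for example, the connectedness of the intersection of $S^2$ with any quadric level set). Beyond that, the only mild technicality is, in the unbounded case, ensuring that $x_0, x_1, x_2$ all lie in $\dom(A_1) \cap \dom(A_2)$; this is straightforward provided this common domain has dimension at least three, a natural standing hypothesis whenever $W_{\mathbb R}(A_1, A_2)$ is meant to contain more than one point.
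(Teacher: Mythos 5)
Your proposal is correct and takes essentially the same approach as the paper: the paper quotes Theorem \ref{Brickman1} from the literature, but its own proof of the closely related Corollary \ref{Brickman1bis} uses exactly your reduction --- fix the two unit vectors, enlarge their span to a three-dimensional subspace (possible since $\dim(\HH)\ge 3$), compress the operators to that subspace (your symmetrized bilinear form plays the same role as the compression $P_W A_l|_W$ used there, and either choice leaves the quadratic forms on the subspace unchanged), and invoke the classical $\mathbb{R}^3$ case of Brickman's theorem. Note only that, since $A_1,A_2$ are assumed to be endomorphisms of $\HH$, the domain caveat at the end of your argument is not needed.
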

As a consequence of Brickman's convexity theorem, it is easy to show that a similar result holds considering two different  inner products in $\HH.$
 The following corollary will be useful to prove our convexity result (see Theorem  \ref{convexidad Polyak1}). In order to include examples of densely defined unbounded operators  (e.g. the differentiation operator on $L^2(\mathbb R)$) we state the next corollary for linear mappings from an inner product space to its completion.  

\begin{corollary}\label{Brickman1bis}
	Let $\HH$ be a real vector space and  let $\langle \cdot , \cdot \rangle, \langle \cdot , \cdot \rangle_*$  be two inner products on $\HH$ and  $3\le dim(\HH)\le \infty$. Consider $A_1,A_2$  (not necessarily bounded) linear transformations from $\HH$ to $\tilde\HH,$ where $\tilde \HH$ denote the completion of $\HH$ with respect to the inner product $\PI{\cdot}{\cdot}$. Then the set
	 \begin{align*}
	\{(\langle A_1x,x\rangle,\langle A_2x,x\rangle)\in \mathbb R^2\,:\, \|x\|_*=1\}
	\end{align*}
	is a convex subset of $\mathbb R^2$, where $\|\cdot\|_*$ is the norm associated to the inner product $\langle \cdot , \cdot \rangle_*$.
\end{corollary}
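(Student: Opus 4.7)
The plan is to reduce the statement to an application of Theorem \ref{Brickman1} on a suitable three-dimensional subspace of $\HH$, equipped with the second inner product $\langle \cdot , \cdot \rangle_*$.

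First I would pick two arbitrary points $p,q$ in the target set, say $p=(\langle A_1x,x\rangle,\langle A_2x,x\rangle)$ and $q=(\langle A_1y,y\rangle,\langle A_2y,y\rangle)$ with $\|x\|_*=\|y\|_*=1$. Since $\dim\HH\ge 3$, I can choose a three-dimensional subspace $V\subseteq\HH$ containing both $x$ and $y$; it then suffices to produce the whole segment $[p,q]$ inside the joint image corresponding to unit vectors in $V$ alone.

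Next, on the finite-dimensional space $V$ the quadratic forms $q_i(z):=\langle A_iz,z\rangle$ are well-defined real-valued functions (the values $A_iz\in\tilde\HH$ can still be paired with $z\in\HH\subseteq\tilde\HH$). Their associated symmetric bilinear forms are $b_i(z,w):=\frac{1}{2}(\langle A_iz,w\rangle+\langle A_iw,z\rangle)$. Because $V$ is finite dimensional, every linear functional on $V$ is automatically continuous with respect to $\|\cdot\|_*$, so by finite-dimensional Riesz representation there exist linear endomorphisms $B_i:V\to V$, symmetric with respect to $\langle\cdot,\cdot\rangle_*$, such that $\langle B_iz,w\rangle_*=b_i(z,w)$ for all $z,w\in V$. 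In particular $\langle B_iz,z\rangle_*=\langle A_iz,z\rangle$ for every $z\in V$. Applying Theorem \ref{Brickman1} to the inner product space $(V,\langle\cdot,\cdot\rangle_*)$ (which has dimension $3$) with the endomorphisms $B_1,B_2$, the set
$$\{(\langle B_1z,z\rangle_*,\langle B_2z,z\rangle_*):z\in V,\ \|z\|_*=1\}=\{(\langle A_1z,z\rangle,\langle A_2z,z\rangle):z\in V,\ \|z\|_*=1\}$$
is convex; since it contains both $p$ and $q$, it contains the segment $[p,q]$, which settles the convexity.

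The step requiring the most care is the construction of the auxiliary endomorphisms $B_i$ on $V$. Because $A_i$ takes values in $\tilde\HH$ rather than in $\HH$ itself, one cannot simply take a ``symmetric part'' of $A_i$ as an endomorphism; one must instead pass to the bilinear forms $b_i$ and exploit that, on a finite-dimensional subspace, every bilinear form is represented by a uniquely determined linear endomorphism with respect to any fixed inner product. This finite-dimensional localization is precisely what lets us avoid any continuity or boundedness hypothesis on $A_1$ and $A_2$, and is the reason the argument works uniformly in the (possibly unbounded) setting allowed by the statement.
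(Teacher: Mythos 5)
Your proposal is correct and follows essentially the same route as the paper: both arguments reduce to a three-dimensional subspace containing the two witness vectors and then invoke Theorem \ref{Brickman1} for the inner product $\langle\cdot,\cdot\rangle_*$ on that subspace, after re-expressing the quadratic forms $\langle A_ix,x\rangle$ as $*$-quadratic forms of endomorphisms of the subspace. The only (cosmetic) difference is how this representation is obtained: you symmetrize the bilinear forms and use finite-dimensional Riesz representation with respect to $\langle\cdot,\cdot\rangle_*$, whereas the paper compresses $A_i$ to the subspace via the orthogonal projection and then passes between the two inner products through an auxiliary operator $B$ with $\langle x,y\rangle=\langle Bx,y\rangle_*$.
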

{\it Proof.}  As in the proof of \cite[Theorem 2.2]{martinez2005brickman}, we first consider   $\HH=\mathbb{R}^{3}.$ In this case $\HH=\tilde\HH=\mathbb{R}^{3}$ and, since $\langle \cdot , \cdot \rangle$ is continuous on $(\mathbb R^3,\langle \cdot , \cdot \rangle_*)$, 
there exists $B \in L(\mathbb{R}^{3})$ such that  $\langle x,y\rangle=\langle Bx,y\rangle_*$ for every $x,y \in \mathbb{R}^3.$
Thus, by Theorem \ref{Brickman1} for $\HH=\mathbb{R}^{3},$  the set
$$\{(\langle A_1x,x\rangle,\langle A_2x,x\rangle)\in \mathbb R^2\,:\, \|x\|_*=1\}=\{(\langle BA_1x,x\rangle_*,\langle BA_2x,x\rangle_*)\in \mathbb R^2\,:\, \|x\|_*=1\}$$
is convex.

Now suppose that $3 \leq \dim(\HH) \leq \infty.$ 

Let $y_1:=(\PI{A_1x_1}{x_1},\PI{A_2x_1}{x_1})$ and $y_2:=(\PI{A_1x_2}{x_2},\PI{A_2x_2}{x_2})$, with $\Vert x_1 \Vert_* = \Vert x_2 \Vert_* = 1,$ be any two different points in $\{(\langle A_1x,x\rangle,\langle A_2x,x\rangle)\in \mathbb R^2\,:\, \|x\|_*=1\}.$ Take any orthonormal basis $\{w_1, w_2\}$ of the space $(\Span\{x_1, x_2\}, \PI{\cdot}{\cdot})$ and take another vector $w_3$ such that $\PI{w_3}{w_1}=\PI{w_3}{w_2}=0$ and $\Vert w_3 \Vert_*= 1.$ Set $W:=\Span\{w_1, w_2, w_3\}$ and consider  the operators $\tilde A_l:=P_W(A_l){|_W}:W\to W$, for $l=1,2$. Then by the first part of the proof, $\{(\langle \tilde{A}_1x,x\rangle,\langle \tilde{A_2}x,x\rangle) \, :\, x \in W, \|x\|_*=1\}$
is convex.

Moreover, since  $\langle \tilde{A}_l x,x\rangle=\langle {A}_l x,x\rangle$ for any $x\in W,$ we have that
$$
\{(\langle \tilde{A}_1x,x\rangle,\langle \tilde{A_2}x,x\rangle) \, :\, x \in W, \|x\|_*=1\}\subset \{(\langle A_1x,x\rangle,\langle A_2x,x\rangle):\,x\in\HH,\, \|x\|_*=1\}.
$$
Finally, since  $x_1,x_2\in W$, we conclude that 
 for every $\lambda \in [0,1],$ $$(1-\lambda)y_1+\lambda y_2 \in 	\{(\langle A_1x,x\rangle,\langle A_2x,x\rangle)\in \mathbb R^2\,:\, \|x\|_*=1\}.$$\qed

\medskip
In \cite{baccari2009extension} the authors tried to extend Polyak's theorems to quadratic forms defined by compact operators on infinite dimensional separable real Hilbert spaces. 
For example, Theorem 2.1 in \cite{baccari2009extension} was intended to show the closedness part of Polyak's theorem. There it is assumed  that   $A_1,A_2 \in L(\HH)$ are compact operators and that there exist scalars $\mu_1,\mu_2 \in \mathbb{R}$ such that $C:=\mu_1A_1+\mu_2 A_2$ satisfies that for some $\alpha >0$,
\begin{equation}\label{bounded below}
 \PI{Cx}{x} \geq \alpha \Vert x \Vert^2 \mbox{ for every } x \in \HH.
\end{equation} 
The reader should be aware that in \cite{baccari2009extension}  an operator $C$ satisfying \eqref{bounded below} is denoted by $C>0$ . It is well known that there are no compact operators on infinite dimensional Hilbert spaces that satisfy \eqref{bounded below}. Indeed, consider $(x_n)_{n \geq 1} \subseteq \overline{B_{\HH}}$ (the closed unit ball). Since $(x_n)_{n \geq 1}$ is bounded and $\overline{B_{\HH}}$ is a closed subset of $\HH,$ then there exists a subsequence $(x_{n_k})_{k \geq 1} \subseteq \overline{B_{\HH}}$  and $x_0 \in\overline{B_{\HH}}$ such that 
$(x_{n_k})_{k \geq 1}$ converges weakly to $x_0.$  Since $C$ is compact, it follows that  $\underset{k \rightarrow \infty}{\lim} \Vert Cx_{n_k} - Cx_0\Vert=0.$ Therefore
\begin{align*}
\Vert  x_{n_k} - x_0 \Vert^2 &\leq \frac{1}{\alpha} \PI{C(x_{n_k}-x_0)}{x_{n_k}-x_0} \leq \frac{1}{\alpha} \Vert Cx_{n_k} - Cx_0\Vert \Vert x_{n_k} - x_0\Vert\underset{k \rightarrow \infty}{\longrightarrow}0.
\end{align*}
Then $\overline{B_{\HH}}$ is norm compact. Therefore $\HH$ is finite dimensional. 

\medskip

The following examples show that the closedness part of Polyak's theorem does not hold neither for pairs of compact positive definite operators (Example \ref{ejemplo no cerrado}) nor for pairs of bounded below operators (Example \ref{ejemplo no cerrado2}) on infinite dimensional spaces. Also, it is not  difficult to extend  both examples  to $k$-tuples of operators.



\begin{example}\label{ejemplo no cerrado}
Take any sequence $(\alpha_n)_n$ of positive real numbers converging to $0$. Consider on $\ell_2$  (the usual Hilbert space of square summable sequences with orthonormal basis $(e_n)_{n\in\mathbb N}$) a pair of diagonal operators defined by
$$
A_0(x)=(\alpha_nx_n)_n,\quad A_1(x)=\big(\alpha_n(1+\frac{1}{n})x_n\big)_n.
$$
Note that $A_0,A_1$ are both positive definite. Since $\alpha_n\to 0$, then $A_0,A_1$ can be uniformly approximated  by finite range operators, so they are both compact operators. Moreover, note that 
for $j=0,1$,
$$ 
\langle A_j\alpha_n^{-1/2}e_n,\alpha_n^{-1/2}e_n\rangle =\alpha_n^{-1}\langle A_je_n,e_n\rangle = 1+\frac{j}{n} \to 1, \quad \textrm{as }n\to\infty.
$$
This means that $(1,1)$ is in the closure of 
$\{(\langle A_0x,x\rangle,\langle A_1x,x\rangle):\,x\in\ell_2 \}$ in $\mathbb R^2$.

But on the other hand, $(1,1)\notin\{(\langle A_0x,x\rangle,\langle A_1x,x\rangle):\,x\in\ell_2 \}$ because
for any $x\ne 0,$
\begin{align*}
\langle A_0x,x\rangle =\sum_n\alpha_nx_n^2<\sum_n\alpha_n(1+\frac1{n})x_n^2=\langle A_1x,x\rangle.
\end{align*}
Therefore the image of the quadratic form determined by $A_0,A_1$ is not closed.
\end{example}

\begin{example}\label{ejemplo no cerrado2}
Take any sequence $(\alpha_n)_n$ of positive real numbers converging to $\alpha>0$. As in Example \ref{ejemplo no cerrado}, let $A_0,A_1$ be operators on $\HH=\ell_2$ defined by,
$$
A_0(x)=(\alpha_nx_n)_n,\quad A_1(x)=\big(\alpha_n(1+\frac{1}{n})x_n\big)_n.
$$
Then $A_0, A_1 \in GL(\HH)^+$  because $\alpha>0$ (in particular, both operators satisfy \eqref{bounded below}). Moreover, proceeding as in the previous example, it follows that $(1,1)$ is in the closure of the image
$\{(\langle A_0x,x\rangle,\langle A_1x,x\rangle):\,x\in\ell_2 \}$ in $\mathbb R^2$, but not in the image of the quadratic form determined by $A_0,A_1$. 
\end{example}

\begin{remark}
It is known that the numerical range of a compact operator is not necessarily closed on infinite dimensional Hilbert spaces: take for example on $\ell_2$ the operator $(x_n)_n\mapsto (\frac{x_n}{n})_n$, then the numerical range is $(0,1]$ (see \cite[Problem 212]{halmos}).
Thus, the image of the unit sphere by pairs of quadratic forms (i.e. the joint numerical range) is not closed in general for infinite dimensional spaces, even for compact operators. On the other hand, since a quadratic form  determined by a compact operator is weakly continuous on bounded sets, and the closed unit ball is weakly compact, we immediately conclude the following: given $\{A_1,\cdots,A_n\},$ any collection of compact operators, the set
$$
\{(\langle A_1x,x\rangle,\cdots,\langle A_nx,x\rangle)\in \mathbb R^n\,:\, \Vert x \Vert \leq 1 \}\quad 
$$ 
\textrm{ is closed}.

\end{remark}

\bigskip
Next, we give some conditions under which the joint image of three quadratic forms is closed and convex. First we need the following lemma, which is an extension of a result in \cite{DeBarra} and shows, using the same ideas, that under certain conditions the joint numerical range of compact operators on real Hilbert spaces is closed.
\begin{lemma}\label{rango numerico compacto}
Consider $A_1,A_2$  compact selfadjoint operators on a real Hilbert space $\HH$. Suppose that $(0,0)\in W_{\mathbb{R}}(A_1,A_2)$ then $W_{\mathbb R}(A_1,A_2)$ is closed. 
\end{lemma}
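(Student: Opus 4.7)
The plan is to take any point $(a,b)$ in the closure of $W_{\mathbb R}(A_1,A_2)$ and exhibit a unit vector attaining it, by combining the weak sequential compactness of $\overline{B_\HH}$ with Brickman's convexity (Theorem~\ref{Brickman1}) and the hypothesis that $(0,0)\in W_{\mathbb R}(A_1,A_2)$.

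First, I would pick unit vectors $x_n$ with $(\langle A_1 x_n,x_n\rangle,\langle A_2 x_n,x_n\rangle)\to(a,b)$. After passing to a subsequence, $x_n$ converges weakly to some $x_0\in\overline{B_\HH}$. Since each $A_i$ is compact, $A_i x_n\to A_i x_0$ in norm, and together with the weak convergence of $x_n$ and the boundedness of $(x_n)$ this yields, via
\[
\langle A_i x_n,x_n\rangle=\langle A_i x_n-A_i x_0,x_n\rangle+\langle A_i x_0,x_n\rangle,
\]
that $\langle A_i x_n,x_n\rangle\to\langle A_i x_0,x_0\rangle$. Hence $x_0$ is mapped exactly to $(a,b)$ by the joint quadratic map.

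It then remains to upgrade $x_0$ to a unit vector without changing its image, which is the case analysis where the hypothesis enters. If $\|x_0\|=1$, we are done directly; if $x_0=0$, then $(a,b)=(0,0)\in W_{\mathbb R}(A_1,A_2)$ by assumption. In the remaining case $r:=\|x_0\|\in(0,1)$, the unit vector $u:=x_0/r$ satisfies $(\langle A_1 u,u\rangle,\langle A_2 u,u\rangle)=(a,b)/r^2$, so $(a,b)/r^2\in W_{\mathbb R}(A_1,A_2)$. Assuming $\dim\HH\ge 3$ (the lower-dimensional case is trivial, since $S_\HH$ is then norm-compact), Theorem~\ref{Brickman1} gives that $W_{\mathbb R}(A_1,A_2)$ is convex, so it contains the convex combination
\[
r^2\cdot\frac{(a,b)}{r^2}+(1-r^2)\cdot(0,0)=(a,b),
\]
which finishes the argument.

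The delicate step is the rescaling at the end: weak-compactness alone produces only a vector in $\overline{B_\HH}$, and it is precisely the interplay of Brickman's convexity with the hypothesis $(0,0)\in W_{\mathbb R}(A_1,A_2)$ that allows converting such a vector into a unit vector with the same image. That the hypothesis cannot be dropped is confirmed by Example~\ref{ejemplo no cerrado}, where $(0,0)\notin W_{\mathbb R}$ and the joint numerical range indeed fails to be closed.
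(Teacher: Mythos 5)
Your proposal is correct and follows essentially the same route as the paper: weak (sub)sequential compactness of the closed ball, compactness of $A_1,A_2$ to pass the quadratic forms to the weak limit, and then Brickman's convexity (Theorem~\ref{Brickman1}) together with $(0,0)\in W_{\mathbb R}(A_1,A_2)$ to rescale the limit vector to the unit sphere. Your explicit case split on $\|x_0\|$ and the remark about $\dim\HH\le 2$ are only minor presentational refinements of the paper's argument.
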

\begin{proof}
    Let $\lambda\in \overline{W_{\mathbb R}(A_1,A_2)}$. Since the closed ball is weakly compact then $$\lambda=\lim_\alpha (\langle A_1x_\alpha,x_\alpha\rangle,\langle A_2x_\alpha,x_\alpha\rangle)$$ for some net $(x_\alpha)_\alpha$ with $\|x_\alpha\|=1$ weakly convergent to some $x$ with $\|x\|\le1$. Moreover, since the operators $A_1,A_2$ are compact, it is easy to see that $\lambda=(\langle A_1x,x\rangle,\langle A_2x,x\rangle)$. If $\lambda=(0,0)$ there is nothing to prove. Otherwise, $x\ne0,$ and thus $\frac{\lambda}{\|x\|^2}$ belongs to $W_{\mathbb R}(A_1,A_2)$. Finally, since $\|x\|\le1$ and $(0,0)\in W_{\mathbb R}(A_1,A_2),$ we conclude that $\lambda\in W_{\mathbb R}(A_1,A_2)$ by Theorem \ref{Brickman1}.  \qed
    \end{proof}

\begin{theorem} \label{propclosed}
Let $F(x)=(\langle A_1x,x\rangle,\langle A_2x,x\rangle,\langle A_3x,x\rangle)$ be a quadratic mapping determined by bounded operators $A_1,A_2,A_3$ on a real Hilbert space $\HH$. Suppose that there exist $\mu_1, \mu_2, \mu_3 \in \mathbb{R}$ such that $\mu_1A_1+\mu_2A_2+\mu_3A_3 \in GL(\HH)^+,$  $A_1,A_2$ are compact and  $(0,0)\in W_{\mathbb{R}}(A_1,A_2)$. Then $F(\HH)$ is closed.
\end{theorem}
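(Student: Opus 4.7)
The plan is to exploit two structural features that emerge once the positive combination is normalized. After conjugating by $C^{-1/2}$ with $C:=\mu_1A_1+\mu_2A_2+\mu_3A_3\in GL(\HH)^+$, the new operators satisfy $\mu_1\tilde A_1+\mu_2\tilde A_2+\mu_3\tilde A_3=I$. This forces the sphere image $\{F(y):\|y\|=1\}$ to lie in an affine hyperplane not through the origin, so that $F(\HH)$ becomes a cone over a compact base that projects homeomorphically onto $W_{\mathbb{R}}(A_1,A_2)$, a set already known to be closed by Lemma \ref{rango numerico compacto}.

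First I would perform the reduction. Note that $\mu_3\neq 0$: otherwise $C=\mu_1A_1+\mu_2A_2$ would be compact, which is incompatible with $C\in GL(\HH)^+$ on an infinite dimensional space. Setting $\tilde A_i:=C^{-1/2}A_iC^{-1/2}$, each $\tilde A_i$ is bounded and selfadjoint, $\tilde A_1,\tilde A_2$ remain compact, and $\sum_{i}\mu_i\tilde A_i=I$. The change of variable $x=C^{-1/2}u$ is a bijection of $\HH$ that sends $F$ to $\tilde F(u):=(\langle\tilde A_iu,u\rangle)_{i=1}^{3}$, so $F(\HH)=\tilde F(\HH)$. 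Moreover, if $y_0\in S_{\HH}$ realizes $(0,0)\in W_{\mathbb R}(A_1,A_2)$, then the normalized vector $C^{1/2}y_0/\|C^{1/2}y_0\|$ realizes $(0,0)\in W_{\mathbb R}(\tilde A_1,\tilde A_2)$. Thus I may assume $C=I$.

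In this normalized setting, for any unit vector $y$ one has $\sum_i\mu_i\langle A_iy,y\rangle=1$, so $\tilde W:=\{F(y):\|y\|=1\}$ is contained in the affine hyperplane $H:=\{(a,b,c)\in\mathbb R^3:\mu_1a+\mu_2b+\mu_3c=1\}$, which has positive distance to the origin. Since the operators are bounded, $\tilde W$ is bounded, and by the $2$-homogeneity of $F$ the whole image is the cone $F(\HH)=\{sw:s\geq 0,\,w\in\tilde W\}$. On $H$, the third coordinate is an affine function of the first two (using $\mu_3\neq 0$), so projection onto $(a,b)$ restricts to a homeomorphism from $\tilde W$ onto $W_{\mathbb R}(A_1,A_2)$; by Lemma \ref{rango numerico compacto} this set is closed, hence $\tilde W$ is a compact set bounded away from $0$. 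It is standard that the cone over such a set is closed: given $s_nw_n\to y$ with $s_n\geq 0$ and $w_n\in\tilde W$, the uniform bounds $0<d\leq\|w_n\|\leq M$ make $(s_n)$ bounded, and extracting convergent subsequences $s_n\to s$, $w_n\to w\in\tilde W$ yields $y=sw\in F(\HH)$. The main place where care is required is the reduction step, particularly checking that the hypothesis $(0,0)\in W_{\mathbb R}(A_1,A_2)$ is preserved under the $C^{-1/2}$-conjugation.
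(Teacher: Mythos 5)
Your argument is correct and is essentially the paper's proof in different packaging: your congruence by $C^{-1/2}$ is unitarily equivalent to the paper's passage to the inner product $\langle (\mu_1A_1+\mu_2A_2+\mu_3A_3)\,\cdot,\cdot\rangle$, and your ``closed cone over a compact base'' step plays the same role as the paper's direct sequential argument, both resting on Lemma \ref{rango numerico compacto} together with the $2$-homogeneity of $F$. Two small repairs are needed: since the $A_i$ are only assumed bounded, you should first replace them by their symmetric parts (otherwise $C^{-1/2}A_iC^{-1/2}$ need not be selfadjoint, as you claim and as Lemma \ref{rango numerico compacto} formally requires), and your justification that $\mu_3\neq 0$ presupposes $\dim\HH=\infty$, so the finite-dimensional case must be disposed of separately (as the paper does by citing Polyak, or simply by noting that $F(S_\HH)$ is then automatically compact and your cone argument applies without the hyperplane-graph step).
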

\begin{proof}
We may assume that $A_1,A_2,A_3$ are selfadjoint and we assume that $\HH$ is infinite dimensional because the finite dimensional case was proved by Polyak  \cite[Theorem 2.1]{polyak1998convexity}.

We first assert that it is sufficient to prove the case when $A_1,A_2$ are compact and $A_3=I$. In fact, consider the linear transformation  $T: \mathbb R^3 \rightarrow \mathbb R^3$ defined by $T(r,s,t)=(r,s,\mu_1 r+\mu_2 s+ \mu_3 t).$ Since $\mu_3 \not =0,$ $T$ is invertible and preserve closedness. Then it suffices to prove that $$T(F(\mathcal H))=\{(\langle  A_1x,x\rangle,\langle  A_2x,x\rangle,\langle \tilde A_3x,x\rangle)\in \mathbb R^3\,:\, x\in\HH\}$$ is closed, where $\tilde A_3:=\mu_1A_1+\mu_2 A_2+\mu_3 A_3$.

Since $\tilde A_3 \in GL(\HH)^+,$ the inner product $\langle x,y\rangle_*:=\langle\tilde A_3x,y\rangle$ makes $(\HH,\langle \cdot, \cdot\rangle_*)$ a Hilbert  space. Denote by $\|\cdot\|_*$ the induced norm, which is equivalent to $\|\cdot\|$.

Then $\PI{x}{y}=\PI{\tilde A_3 \tilde A_3^{-1}x}{y}=\langle \tilde A_3^{-1}x, y \rangle_*,$ for every $x,y \in \HH,$ and 
$$T(F(\HH))=\{(\langle \tilde A_3^{-1}A_1x,x\rangle_*,\langle \tilde A_3^{-1}A_2x,x\rangle_*,\|x\|_*^2)\in \mathbb R^3\,:\, x\in\HH\}.$$ 

Finally note that, in $(\HH,\langle\cdot,\cdot\rangle_*),$ we have that $\tilde A_3^{-1}A_1,\tilde A_3^{-1}A_2$ are compact operators and $(0,0)\in W_{\mathbb{R}}(\tilde A_3^{-1}A_1,\tilde A_3^{-1}A_2)$.

Suppose then that $A_3=I$ and take $\lambda=(\lambda_1,\lambda_2,\lambda_3)\in \overline{F(\HH)}$. 
Then $$\lambda=\lim_n F(x_n)=\lim_n (\langle A_1x_n,x_n\rangle,\langle A_2x_n,x_n\rangle,\Vert x_n \Vert^2)$$ for some sequence $(x_n)_n \subseteq \HH.$ If $\lambda_3=0$ then $0=\lambda_3=\lim_n \Vert x_n \Vert^2.$ So that  $\lambda=0\in F(\HH).$ 

If $\lambda_3\ne0$, then $\lambda_3=\lim_n \Vert x_n \Vert^2.$ Therefore,
$$\lim_n \langle A_j\frac{x_n}{\Vert x_n \Vert},\frac{x_n}{\Vert x_n \Vert}\rangle =\frac{\lambda_j}{\lambda_3} \mbox{ for } j=1, 2.$$
Then $(\frac{\lambda_1}{\lambda_3},\frac{\lambda_2}{\lambda_3},1)\in \overline{F(S_\HH)}$ and $(\frac{\lambda_1}{\lambda_3},\frac{\lambda_2}{\lambda_3})\in \overline{W_{\mathbb R}(A_1,A_2)}=W_{\mathbb R}(A_1,A_2),$ where we used Lemma \ref{rango numerico compacto}. Hence, there exists $z \in S_\HH$ such that $$(\frac{\lambda_1}{\lambda_3},\frac{\lambda_2}{\lambda_3})=(\langle A_1z,z\rangle,\langle A_2z,z\rangle).$$ Then
$F(\lambda_3^{1/2}z)=(\lambda_3 \langle A_1z,z\rangle,\lambda_3 \langle A_2z,z\rangle, \lambda_3)=\lambda,$
so that $\lambda\in F(\HH)$.\qed
\end{proof}
\begin{remark}
    Modifying Example \ref{ejemplo no cerrado}, it  can be  seen that the assumption $(0,0)\in W_{\mathbb{R}}(A_1,A_2)$ cannot be dropped in the above theorem. Indeed, take $A_j(x)=(\frac{j}{n}x_n)_n,$ for $j=1,2,$ $A_3=I$. Then $(0,0,1)=\lim_n F(e_n)$ is in $\overline{F(\HH)}$ but not in $F(\HH)$.

\end{remark}

With a similar proof we may show the following more general result.
\begin{corollary} \label{prop homog closed gral}
Let $F(x)=(\langle A_1x,x\rangle,\langle A_2x,x\rangle,\langle A_3x,x\rangle)$ be a quadratic mapping determined by operators $A_1,A_2,A_3$ on a real Hilbert space $\HH$. 
Suppose that there are linear combinations
$$\tilde A_i:=\mu_{i1}A_1+\mu_{i2}A_2+\mu_{i3}A_3 \mbox{ for } i=1,2,3$$
such that the $3\times 3$ matrix of real numbers  $\displaystyle \mu=(\mu_{ij})_{i,j=1}^3$ is not singular, $\tilde A_3 \in GL(\HH)^+,$ $\tilde A_1,\tilde A_2$ are compact and  $(0,0)\in W_{\mathbb{R}}(\tilde A_1,\tilde A_2)$. Then $F(\HH)$ is closed.
\end{corollary}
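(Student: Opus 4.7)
The plan is to reduce this to Theorem \ref{propclosed} by applying an invertible linear change of coordinates on $\mathbb{R}^3$. Define the linear map $T:\mathbb{R}^3\to\mathbb{R}^3$ by the matrix $\mu=(\mu_{ij})_{i,j=1}^3$, that is, $T(r_1,r_2,r_3)=\bigl(\sum_j\mu_{1j}r_j,\sum_j\mu_{2j}r_j,\sum_j\mu_{3j}r_j\bigr)$. A direct computation shows that for every $x\in\HH$,
\[
T(F(x))=\bigl(\langle \tilde A_1x,x\rangle,\langle \tilde A_2x,x\rangle,\langle \tilde A_3x,x\rangle\bigr)=:\tilde F(x),
\]
so $T(F(\HH))=\tilde F(\HH)$.

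Next I would verify that $\tilde A_1,\tilde A_2,\tilde A_3$ satisfy the hypotheses of Theorem \ref{propclosed}. Indeed, the trivial combination $0\cdot\tilde A_1+0\cdot\tilde A_2+1\cdot\tilde A_3=\tilde A_3$ lies in $GL(\HH)^+$ by assumption, $\tilde A_1,\tilde A_2$ are compact by assumption, and $(0,0)\in W_\mathbb{R}(\tilde A_1,\tilde A_2)$ by assumption. Therefore Theorem \ref{propclosed}, applied to the triple $(\tilde A_1,\tilde A_2,\tilde A_3)$, gives that $\tilde F(\HH)$ is closed in $\mathbb{R}^3$.

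Finally, since $\mu$ is non-singular, $T$ is a linear homeomorphism of $\mathbb{R}^3$, so it maps closed sets to closed sets and preserves closedness under preimage. Thus $F(\HH)=T^{-1}(\tilde F(\HH))$ is closed, which concludes the proof.

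There is essentially no serious obstacle: the only thing to check carefully is that the hypotheses of Theorem \ref{propclosed} transfer to the tilded operators, which is immediate because the positive definiteness, compactness and numerical-range conditions are built directly into the assumptions on $\tilde A_1,\tilde A_2,\tilde A_3$. The role of the non-singularity of $\mu$ is simply to ensure that $T$ is a bijection, so that closedness passes back from $\tilde F(\HH)$ to $F(\HH)$.
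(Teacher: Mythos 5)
Your proof is correct and follows essentially the route the paper intends (the paper only remarks that the corollary follows ``with a similar proof'' to Theorem \ref{propclosed}): you apply the invertible linear map $T$ on $\mathbb{R}^3$ given by the matrix $\mu$, check that the tilded triple satisfies the hypotheses of Theorem \ref{propclosed} with coefficients $(0,0,1)$, and transport closedness back through $T^{-1}$. If anything, your version is slightly cleaner, since it uses Theorem \ref{propclosed} as a black box instead of re-running its argument with the more general coordinate change.
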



We prove now the extension of Polyak convexity theorem \cite[Theorem 2.1]{polyak1998convexity} to not necessarily bounded linear operators on inner product spaces.

\begin{theorem}\label{convexidad Polyak1}
	Let $\HH$ be a real inner product space, $3\le dim(\HH)\le \infty$. Let $A_1,A_2,A_3$ be linear transformations from $\HH$ to its completion $\tilde \HH$  such that there exist $\mu_1, \mu_2, \mu_3 \in \mathbb{R}$  with $\mu_1A_1+\mu_2 A_2+\mu_3 A_3>0$. Then the set
	\begin{align*}
	F(\HH)=\{(\langle A_1x,x\rangle,\langle A_2x,x\rangle,\langle A_3x,x\rangle)\in \mathbb R^3\,:\, x\in\HH\}
	\end{align*}
	is a convex cone in $\mathbb R^3$. 
\end{theorem}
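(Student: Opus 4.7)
The approach is to extend the reduction already used in the proof of Theorem \ref{propclosed}: apply an invertible linear change of coordinates on $\mathbb R^3$ to make the third quadratic form positive definite, then a change of inner product on $\HH$ that turns that third form into a squared norm, bringing the problem squarely into the setting of Corollary \ref{Brickman1bis}.

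My first observation is that $F(\lambda x)=\lambda^2 F(x)$ for every $\lambda\in\mathbb R$, so $F(\HH)$ is automatically stable under multiplication by non-negative scalars. Hence $F(\HH)$ is a convex cone if and only if it is closed under addition, which reduces the task to showing that $v_1+v_2\in F(\HH)$ whenever $v_j=F(x_j)$, $j=1,2$. Next, after permuting coordinates if necessary I assume $\mu_3\neq 0$ and apply the linear isomorphism $T(r,s,t):=(r,s,\mu_1 r+\mu_2 s+\mu_3 t)$ of $\mathbb R^3$: since $T$ preserves both convexity and the cone property, it suffices to prove the analogous statement for $T(F(\HH))=\{(\langle A_1 x,x\rangle,\langle A_2 x,x\rangle,\langle \tilde A_3 x,x\rangle):x\in\HH\}$, where $\tilde A_3:=\mu_1 A_1+\mu_2 A_2+\mu_3 A_3>0$.

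Now I introduce the auxiliary inner product $\langle x,y\rangle_*:=\langle \tilde A_3 x,y\rangle$ on $\HH$, which is well-defined and positive because $\tilde A_3$ is symmetric and strictly positive; the third coordinate of the tuple above becomes exactly $\|x\|_*^2$. Given $v_j=(\langle A_1 x_j,x_j\rangle,\langle A_2 x_j,x_j\rangle,\|x_j\|_*^2)$, the case where some $x_j=0$ is trivial; otherwise let $r^2:=\|x_1\|_*^2+\|x_2\|_*^2$, $\alpha:=\|x_1\|_*^2/r^2\in(0,1)$, and $\tilde x_j:=x_j/\|x_j\|_*$. Applying Corollary \ref{Brickman1bis} to $A_1,A_2$ with the pair of inner products $\langle\cdot,\cdot\rangle$ and $\langle\cdot,\cdot\rangle_*$ (the dimension hypothesis $3\le \dim\HH\le\infty$ is inherited), the convex combination
\[
\alpha\bigl(\langle A_1\tilde x_1,\tilde x_1\rangle,\langle A_2\tilde x_1,\tilde x_1\rangle\bigr)+(1-\alpha)\bigl(\langle A_1\tilde x_2,\tilde x_2\rangle,\langle A_2\tilde x_2,\tilde x_2\rangle\bigr)
\]
equals $(\langle A_1 z,z\rangle,\langle A_2 z,z\rangle)$ for some $z\in\HH$ with $\|z\|_*=1$. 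Setting $w:=rz$ then yields $v_1+v_2=(\langle A_1 w,w\rangle,\langle A_2 w,w\rangle,\|w\|_*^2)\in T(F(\HH))$, as required.

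The main delicate point is purely bookkeeping: one must verify that the original inner product $\langle\cdot,\cdot\rangle$ remains the one with respect to whose completion $A_1,A_2$ take values, while $\langle\cdot,\cdot\rangle_*$ is the sphere-defining inner product in Corollary \ref{Brickman1bis}. Once this is checked, the cone and convexity structures are preserved throughout, and the argument closes as above.
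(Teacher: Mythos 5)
Your proposal is correct and follows essentially the same route as the paper's proof: the same reduction via the invertible map $T(r,s,t)=(r,s,\mu_1 r+\mu_2 s+\mu_3 t)$ (after arranging $\mu_3\neq0$), the same auxiliary inner product $\langle x,y\rangle_*=\langle \tilde A_3 x,y\rangle$ turning the third form into $\|x\|_*^2$, and the same appeal to Corollary \ref{Brickman1bis} on the $\ast$-unit sphere. The only cosmetic difference is the final step: you verify closedness under addition directly by constructing $w=rz$, whereas the paper concludes by writing $T(F(\HH))$ as the union of non-negative dilates of the convex set $T(F(S_{\HH_*}))$; both rest on the homogeneity $F(\lambda x)=\lambda^2F(x)$.
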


\begin{proof}
We may suppose that $\mu_3\ne 0$ (otherwise we interchange the order of the operators).

As in the proof of Theorem \ref{propclosed}, consider the linear transformation  $T: \mathbb R^3 \rightarrow \mathbb R^3$ defined by $T(r,s,t)=(r,s,\mu_1 r+\mu_2 s+ \mu_3 t).$ Then $T$ is invertible and preserves convexity. Therefore it suffices to prove that 
$$T(F(\mathcal H))=\{(\langle  A_1x,x\rangle,\langle  A_2x,x\rangle,\langle \tilde A_3x,x\rangle)\in \mathbb R^3\,:\, x\in\HH\}$$ is convex, where $\tilde A_3:=\mu_1A_1+\mu_2 A_2+\mu_3 A_3$. 
Since $\tilde A_3>0$, the bilinear form $\PI{\cdot}{\cdot}_{*}:=\langle\tilde A_3\cdot,\cdot\rangle$ makes $\HH_*:=(\HH,\PI{\cdot}{\cdot}_{*})$ an inner product space with norm denoted by $\|\cdot\|_*$.
Then $$T(F(\HH))=\{(\langle A_1x,x\rangle,\langle A_2x,x\rangle,\|x\|_*^2)\in \mathbb R^3\,:\, x\in\HH\}.$$

By Corollary \ref{Brickman1bis}, the set $\{(\langle A_1x,x\rangle,\langle A_2x,x\rangle,\|x\|_*^2)\in \mathbb R^3\,:\, \|x\|_*=1\}=T(F(S_{\HH_*}))$ is convex. Hence by homogeneity, $T(F(\mathcal H))$ is a convex cone because $$T(F(\HH))=\bigcup_{t\ge0}t\cdot \{(\langle A_1x,x\rangle,\langle A_2x,x\rangle,\|x\|_*^2)\in \mathbb R^3\,:\, \|x\|_*=1\}.$$\qed
\end{proof}

\section{The non-homogeneous case}

Using the closedness of the joint image of a pair of non necessarily homogeneous quadratic forms, it was proved in \cite[Theorem 2.2]{baccari2009extension} that this image is also convex. 
We will now prove Polyak's theorem for non-homogeneous quadratic forms without assuming that it is closed.

\begin{proposition}\label{Polyakdiminfinita}
Let $\HH$ be a real inner product space, $3\le dim(\HH)\le \infty$. 
Let $A_1,A_2\in L(\HH)$ be such that $\mu_1 A_1+\mu_2 A_2>0$ for some $\mu_1,\mu_2\in \mathbb R,$  $a_1,a_2\in\HH$ and $b_1,b_2\in\mathbb R$. 
Let $\Phi=(\phi_1,\phi_2)$ be the non-homogeneous quadratic form defined by 
$\phi_j(x)=\langle A_jx,x\rangle +\langle x,a_j\rangle + b_j$, $j=1,2.$  Then 
\begin{align*}
\Phi(\HH)=\{(\phi_1(x),\phi_2(x))\in\mathbb R^2\,:\, x\in\HH\}
\end{align*}
is convex.
\end{proposition}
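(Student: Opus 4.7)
The plan is to reduce to a finite-dimensional slice and then invoke Theorem \ref{convexidad Polyak1}. Let $\lambda\in[0,1]$ and $x_1,x_2\in\HH$; I want to produce $z\in\HH$ satisfying $\phi_j(z)=\lambda\phi_j(x_1)+(1-\lambda)\phi_j(x_2)$ for $j=1,2$. First I would choose a finite-dimensional subspace $V\subseteq\HH$ of dimension at least $2$ containing $x_1,x_2,a_1,a_2$ (possible since $\dim\HH\ge 3$), and set $\tilde A_j:=P_V A_j|_V$ on $V$. Then $\phi_j(v)=\langle\tilde A_j v,v\rangle+\langle v,a_j\rangle+b_j$ for every $v\in V$, and $\mu_1\tilde A_1+\mu_2\tilde A_2$ remains positive definite on $V$; being positive definite on a finite-dimensional space, it is in particular invertible.

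Next I would homogenize on $V\oplus\mathbb R$ (whose dimension is at least $3$). Define self-adjoint operators $\hat B_1,\hat B_2\in L(V\oplus\mathbb R)$ by the standard homogenization formula
\[
\langle\hat B_j(v,t),(v,t)\rangle=\langle\tilde A_j v,v\rangle+t\langle v,a_j\rangle+b_j t^2,
\]
and set $\hat B_3(v,t):=(0,t)$, so $\langle\hat B_3(v,t),(v,t)\rangle=t^2$. Using the invertibility of $\mu_1\tilde A_1+\mu_2\tilde A_2$, a Schur-complement (completing-the-square) computation produces $\mu_3\in\mathbb R$ for which $\mu_1\hat B_1+\mu_2\hat B_2+\mu_3\hat B_3>0$ on $V\oplus\mathbb R$. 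Theorem \ref{convexidad Polyak1} then yields that
\[
G:=\{(\langle\hat B_1 y,y\rangle,\langle\hat B_2 y,y\rangle,\langle\hat B_3 y,y\rangle):y\in V\oplus\mathbb R\}
\]
is a convex cone in $\mathbb R^3$.

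To conclude, I would slice $G$ by the hyperplane $\{z_3=1\}$ and project onto the first two coordinates; both operations preserve convexity. The slice corresponds to $(v,t)\in V\oplus\mathbb R$ with $t^2=1$, i.e.\ $t=\pm1$, and in either case the projection gives a point of the form $(\phi_1(\pm v),\phi_2(\pm v))$. Hence the projected slice equals $\Phi(V):=\{(\phi_1(v),\phi_2(v)):v\in V\}\subseteq\Phi(\HH)$, which is therefore convex; since $\Phi(x_1),\Phi(x_2)\in\Phi(V)$, the desired convex combination lies in $\Phi(V)\subseteq\Phi(\HH)$. The main obstacle is the positive-definiteness step: a direct homogenization on the whole $\HH\oplus\mathbb R$ would require $\mu_1\hat B_1+\mu_2\hat B_2+\mu_3\hat B_3>0$ there, which can fail whenever $\mu_1 A_1+\mu_2 A_2$ is positive definite but not bounded below (as in Example \ref{ejemplo no cerrado}), and then no $\mu_3$ works. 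The reduction to finite-dimensional $V$ circumvents this precisely because positive definiteness on a finite-dimensional space automatically yields invertibility.
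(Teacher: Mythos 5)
Your proof is correct. The first half coincides with the paper's argument: the paper also fixes the two points $\Phi(x),\Phi(y)$, compresses $A_1,A_2$ (and the linear parts) to a low-dimensional subspace $\tilde\HH=\Span\{w,x,y\}$ via $P_{\tilde\HH}A_j|_{\tilde\HH}$, and checks that the compressed combination stays positive definite, exactly as you do with your subspace $V$ (your inclusion of $a_1,a_2$ in $V$ is harmless but not needed, since $\langle v,a_j\rangle=\langle v,P_Va_j\rangle$ for $v\in V$). Where you diverge is the finish: the paper simply quotes Polyak's original finite-dimensional non-homogeneous theorem for $\tilde\Phi(\tilde\HH)$, whereas you re-derive that finite-dimensional statement from the homogeneous result, Theorem \ref{convexidad Polyak1}, by homogenizing on $V\oplus\mathbb R$, producing a positive definite combination through a Schur-complement/completing-the-square choice of $\mu_3$ (possible because the compressed operator is invertible in finite dimensions), and then slicing the convex cone at $z_3=1$ and projecting, using $F(v,-1)=F(-v,1)$ to identify the slice with $\Phi(V)$. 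This buys self-containedness — only results proved in the paper are used — and it mirrors the homogenization device the paper itself employs later in Proposition \ref{propclosednohomogeneo}; the paper's route is shorter at the cost of invoking Polyak's theorem as a black box. Your closing remark is also a genuine point, not just a caveat: on all of $\HH\oplus\mathbb R$ no $\mu_3$ need exist when $\mu_1A_1+\mu_2A_2$ is positive definite but not bounded below (the infimum of $\langle(\mu_1A_1+\mu_2A_2)x,x\rangle+\langle x,\mu_1a_1+\mu_2a_2\rangle$ can be $-\infty$), which is precisely why the finite-dimensional reduction is the essential step in both arguments.
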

\begin{proof} 
Let $t,s\in \Phi(\HH),$ with $t\ne s$, then there exist $x, y \in \HH$ such that 
$$t=\Phi(x) \mbox{ and } s=\Phi(y).$$
Consider $\tilde\HH:=\Span \{w,x, y\}$, where $w\in\HH$ is linearly independent to $x$ and $y$. Note that $2\le dim(\tilde\HH)\le 3$. 
Let $\PI{\cdot}{\cdot}_{\tilde \HH}$ be the restriction of $\PI{\cdot}{\cdot}$ to $\tilde \HH.$ Let $P_{\tilde \HH}$ denote the orthogonal projection onto the  finite dimensional Hilbert space $\tilde\HH.$
Set  $\tilde{\Phi}:=\Phi|_{\tilde\HH}=(\tilde{\phi_1},\tilde{\phi_2})$ where $\tilde{\phi_j}:=\phi_j|_{\tilde\HH}$ for $j=1,2.$ 
Then, $\tilde{\phi_j}:\tilde{\HH} \rightarrow \mathbb R,$
$t=\tilde{\Phi}(x),$ $s=\tilde{\Phi}(y)$ and, for $z \in \tilde \HH,$
\begin{align*}
\tilde{\phi_j}(z)&=\PI{A_jz}{z}+\PI{a_j}{z}+b_j\\
&=\PI{A_j|_{\tilde\HH}z}{P_{\tilde \HH}z}+\PI{a_j}{P_{\tilde \HH}z}+b_j\\
&=\PI{P_{\tilde \HH}A_j|_{\tilde\HH}z}{z}_{\tilde \HH}+\PI{P_{\tilde \HH}a_j}{z}_{\tilde \HH}+b_j.
\end{align*}
Let $\tilde{A_j}:=P_{\tilde \HH}A_j|_{\tilde\HH}$ for $j=1,2.$ Then  $\mu_1 \tilde{A_1}+\mu_2 \tilde{A_2}>0.$ In fact, for $z \in \tilde \HH$ we have
\begin{align*}
\PI{(\mu_1 \tilde{A_1}+\mu_2 \tilde{A_2})z}{z}_{\tilde \HH}&=\PI{(\mu_1P_{\tilde \HH}A_1|_{\tilde\HH}+\mu_2P_{\tilde \HH}A_2|_{\tilde\HH})z}{z}=\PI{(\mu_1A_1+\mu_2A_2)z}{z} > 0.
\end{align*}
Then, by Polyak's Theorem, $\tilde \Phi(\tilde\HH)$ is a convex set. Therefore, for every $\alpha \in [0,1],$ 
$$\alpha t+ (1-\alpha) s \in \tilde{\Phi}(\tilde \HH) \subseteq \Phi(\HH).$$  
Hence $\Phi(\HH)$ is a convex set.\qed
\end{proof}

\begin{remark}
We may actually prove the convexity of the image of $\Phi$ under the hypothesis of $A_1,A_2$ being non-degenerate (that is, if $\PI{A_1u}{u}=0=\PI{A_2u}{u}$ then $u=0$).
For infinite dimensional Hilbert spaces, this is a strictly weaker assumption, see e.g. \cite{calabi1964linear}.
\end{remark}
\begin{proof}
Using the notation as in the proof of Proposition \ref{Polyakdiminfinita}, it is clear that $\tilde A_1,\tilde A_2$ is a non-degenerate pair.
If the 2-homogeneous part of $\tilde\Phi$ is not surjective, then
by \cite[Corollary 1]{dines1941mapping}, there are $\mu_1, \mu_2 \in \mathbb{R}$ such that $\mu_1 \tilde{A_1}+\mu_2 \tilde{A_2}>0.$ Then, by Polyak's Theorem, $\tilde \Phi(\tilde\HH)$ is a convex set.

On the contrary, if the 2-homogeneous part of $\tilde\Phi$ is surjective, then by \cite[Lemma 4.10]{flores2016characterizing}, $\tilde \Phi(\tilde \HH)=\mathbb R^2.$
Therefore,  $\tilde \Phi(\tilde\HH)$ is a convex set. Then, for every $\alpha \in [0,1],$ 
Therefore $\hat{A}_1$ and $\hat{A}_3$ are compact operators and $(0,0)\in W_{\mathbb{R}}(\hat{A}_1, \hat{A}_3).$
Hence $\Phi(\HH)$ is a convex set.
\qed
\end{proof}

\begin{proposition} \label{propclosednohomogeneo}
Let $\HH$ be a real Hilbert space, $3\le dim(\HH)\le \infty$. Let $A_1,A_2\in L(\HH)$ be selfadjoint operators, $a_1,a_2\in\HH$ and $b_1,b_2\in\mathbb R$. 
Let $\Phi=(\phi_1,\phi_2)$ be the non-homogeneous quadratic form defined by 
$\phi_j(x)=\langle A_jx,x\rangle +\langle x,a_j\rangle + b_j$, $j=1,2.$ 
Suppose that there are linear combinations 
$$
\tilde A_1:=\alpha_1A_1+\alpha_2A_2, \quad \tilde A_2 :=\beta_1A_1+\beta_2A_2
$$ 
such that $\alpha_1\beta_2-\alpha_2\beta_1\ne 0$,
$\tilde A_1$ is compact, $\langle \tilde A_1x,x\rangle=0$ for some $x \ne 0$ and $\tilde A_2 \in GL(\HH)^+.$
Then $\Phi(\HH)$ is convex and closed.
\end{proposition}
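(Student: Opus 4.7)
The plan is to establish convexity and closedness separately. Convexity is immediate from Proposition \ref{Polyakdiminfinita}: taking $\mu_1 := \beta_1$ and $\mu_2 := \beta_2$, the hypothesis $\tilde A_2 \in GL(\HH)^+$ yields $\mu_1 A_1 + \mu_2 A_2 = \tilde A_2 > 0$, so $\Phi(\HH)$ is convex.

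For closedness, I would use the standard homogenization trick to reduce the problem to Corollary \ref{prop homog closed gral}. Set $\widehat\HH := \HH \oplus \mathbb R$ with the natural inner product and introduce the bounded selfadjoint operators
$$\widehat A_j(x, t) := \Bigl(A_j x + \tfrac{t}{2}\, a_j,\ \tfrac{1}{2}\langle a_j, x\rangle + b_j t\Bigr),\quad j = 1, 2,$$
together with $\widehat A_3(x,t) := (0, t)$, the rank-one orthogonal projection onto $\{0\}\oplus \mathbb R$. A direct check gives $\langle \widehat A_j(x, t), (x, t)\rangle = \langle A_j x, x\rangle + t \langle a_j, x\rangle + b_j t^2$ and $\langle \widehat A_3(x, t), (x, t)\rangle = t^2$, so the homogeneous map $F(x, t) := (\langle \widehat A_1(x,t),(x,t)\rangle, \langle \widehat A_2(x,t),(x,t)\rangle, t^2)$ satisfies $F(x, 1) = (\phi_1(x), \phi_2(x), 1)$ and $F(x, -1) = (\phi_1(-x), \phi_2(-x), 1)$. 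Hence $F(\widehat\HH) \cap (\mathbb R^2 \times \{1\}) = \Phi(\HH) \times \{1\}$, and closedness of $F(\widehat\HH)$ in $\mathbb R^3$ entails closedness of $\Phi(\HH)$ in $\mathbb R^2$ via the homeomorphism $(r, s, 1) \mapsto (r, s)$.

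To apply Corollary \ref{prop homog closed gral} to $\widehat A_1, \widehat A_2, \widehat A_3$, consider the linear combinations
$$C_1 := \alpha_1 \widehat A_1 + \alpha_2 \widehat A_2,\quad C_2 := \widehat A_3,\quad C_3 := \beta_1 \widehat A_1 + \beta_2 \widehat A_2 + \gamma \widehat A_3,$$
for a scalar $\gamma > 0$ to be fixed. Their $3 \times 3$ coefficient matrix has determinant $-(\alpha_1 \beta_2 - \alpha_2 \beta_1) \neq 0$, hence is nonsingular. The $(1, 1)$-block of $C_1$ is the compact operator $\tilde A_1$ while its other three blocks have rank at most one, so $C_1$ is compact; $C_2 = \widehat A_3$ is itself rank one. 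Taking $x_0 \in \HH$ with $\|x_0\| = 1$ and $\langle \tilde A_1 x_0, x_0\rangle = 0$ (which exists by hypothesis), the unit vector $(x_0, 0) \in \widehat\HH$ witnesses $(0, 0) \in W_{\mathbb R}(C_1, C_2)$. Finally, $\langle C_3(x, t), (x, t)\rangle = \langle \tilde A_2 x, x\rangle + t\langle c, x\rangle + (d + \gamma) t^2$, with $c := \beta_1 a_1 + \beta_2 a_2$ and $d := \beta_1 b_1 + \beta_2 b_2$; using $\tilde A_2 \geq c_0 I$ for some $c_0 > 0$ together with a Cauchy--Schwarz estimate $|t\langle c, x\rangle| \leq \tfrac{c_0}{2}\|x\|^2 + \tfrac{\|c\|^2}{2c_0}\,t^2$, one sees that this form is uniformly positive for $\gamma$ large enough, so $C_3 \in GL(\widehat\HH)^+$. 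The corollary then gives $F(\widehat\HH)$ closed, as required.

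The main subtlety is the design of the three linear combinations on $\widehat\HH$: adjoining the rank-one projection $\widehat A_3$ to the pair $\widehat A_1, \widehat A_2$ is exactly what makes the coefficient matrix nonsingular, lets the condition $(0, 0) \in W_{\mathbb R}(C_1, C_2)$ be certified by the simple test vector $(x_0, 0)$, and allows the positive definite invertible operator $\tilde A_2$ on $\HH$ to be promoted, by adding a sufficiently large multiple of $\widehat A_3$, to a member of $GL(\widehat\HH)^+$ on the enlarged space.
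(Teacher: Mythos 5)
Your proposal is correct and follows essentially the same route as the paper: homogenize on $\HH\oplus\mathbb R$, apply the homogeneous closedness result (the paper first reduces via the plane transformation $T$ and then uses Theorem \ref{propclosed}, while you fold that transformation into the linear combinations and invoke Corollary \ref{prop homog closed gral} directly, which amounts to the same thing), and recover $\Phi(\HH)$ by slicing at third coordinate $1$, using $F(x,-1)=F(-x,1)$. The only noteworthy variation is your verification that $C_3\in GL(\widehat\HH)^+$ via an elementary Cauchy--Schwarz coercivity estimate for $\gamma$ large, in place of the paper's shorted-operator/Schur-complement argument with an explicit threshold and block inverse; your estimate is simpler and equally valid.
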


In particular if $0$ is in the numerical range of $A_1$, $A_1$ is compact and $A_2 \in GL(\HH)^+.$
Then $\Phi(\HH)$ is convex and closed.

\begin{proof} Since $\tilde A_2 \in GL(\HH)^+,$ by Proposition \ref{Polyakdiminfinita}, $\Phi(\HH)$ is convex.

Now we are going to show that $\Phi(\HH)$ is closed. As in the proof of Theorem \ref{propclosed}, consider the linear transformation  $T: \mathbb R^2 \rightarrow \mathbb R^2$ defined by $T(r,t)=(\alpha_1 r+\alpha_2 t,\beta_1 r+\beta_2 t).$ Since $\alpha_1\beta_2-\alpha_2\beta_1\ne 0,$
$T$ is invertible and preserves closedness. 
Therefore it suffices to prove that 
$$T(\Phi(\mathcal H))=\{(\langle  \tilde A_1x,x\rangle+\langle x,\tilde a_1\rangle + \tilde b_1,\langle  \tilde A_2x,x\rangle+\langle x,\tilde a_2\rangle + \tilde b_2)\in \mathbb R^2\,:\, x\in\HH\}$$ is closed, where $\tilde A_1:=\alpha_1A_1+\alpha_2 A_2$ is compact, $0=\langle \tilde A_1x, x\rangle$ for some $x \in S_\HH,$ $\tilde A_2:=\beta_1A_1+\beta_2 A_2  \in GL(\HH)^+$, $\tilde a_1:=\alpha_1a_1+\alpha_2a_2,$ $\tilde a_2 :=\beta_1a_1+\beta_2a_2,$ $\tilde b_1:=\alpha_1b_1+\alpha_2b_2$ and $\tilde b_2 :=\beta_1b_1+\beta_2b_2$.

Let $\tilde\HH:=\HH\times\mathbb R$ and define the following 2-homogeneous forms on $\tilde\HH$:
\begin{align*}
f_j(x,t)= & \langle \tilde A_jx,x\rangle +t\langle x,\tilde a_j\rangle + t^2 \tilde b_j,\qquad j=1,2\\ 
f_3(x,t)= & t^2.
\end{align*} Then, the homogeneous quadratic form $f_j$ is determined by the selfadjoint operators 
$$\hat{A}_j:=\left(\begin{array}{cc}
\tilde A_j & \frac{\tilde a_j}{2}\\
\langle\cdot,\frac{\tilde a_j}{2}\rangle & \tilde b_j
\end{array}\right) \mbox{ for } j=1,2 \mbox{ and } \hat{A}_3=\left(\begin{array}{cc}
0 & 0\\
0 & 1
\end{array}\right).$$ 

In fact, we have $\displaystyle \PI{\hat{A}_3 \left(\begin{array}{cc}
x \\
t
\end{array}\right)}{\left(\begin{array}{cc}
x \\
t
\end{array}\right)}=t^2=f_3(x,t), $  and for $j=1,2,$
\begin{align*}
\PI{\hat{A}_j \left(\begin{array}{cc}
x \\
t
\end{array}\right)}{\left(\begin{array}{cc}
x \\
t
\end{array}\right)} &=\PI{\left(\begin{array}{cc}
\tilde A_jx+ t\frac{\tilde a_j}{2}\\
\langle x,\frac{\tilde a_j}{2}\rangle+t \tilde b_j
\end{array}\right)}{\left(\begin{array}{cc}
x \\
t
\end{array}\right)}= \langle \tilde A_j x,x\rangle +t\langle x,\tilde a_j\rangle + t^2 \tilde b_j=f_j(x,t).
\end{align*}

Also, $\hat{A}_1$ and $\hat{A}_3$ are compact operators and $(0,0)\in W_{\mathbb{R}}(\hat{A}_1, \hat{A}_3).$

Let $\mu_3 \in \mathbb R$ be such that $$\mu_3 > \Vert {\tilde A_2}^{-1/2} \frac{\tilde a_2}{2} \Vert^2 - \tilde b_2$$ then $\hat{A}_2+\mu_3 \hat{A}_3 \in GL(\tilde \HH)^+.$ In fact, 
$$Z:=\hat{A}_2+\mu_3 \hat{A}_3=\left(\begin{array}{cc}
\tilde A_2 & \frac{\tilde a_2}{2}\\
\langle\cdot,\frac{\tilde a_2}{2}\rangle & \, \, \tilde b_2+\mu_3
\end{array}\right)=\left(\begin{array}{cc}
\tilde A_2 & d \\
d^* & \, \, \tilde b_2+\mu_3
\end{array}\right),$$
where  $d: \mathbb R \rightarrow \HH$ is the operator defined by 
$d(t):=t\frac{\tilde a_2}{2}.$ Then $d^*=\PI{\cdot}{\frac{\tilde a_2}{2}},$ 
$d={\tilde A_2}^{1/2}({\tilde A_2}^{-1/2}d)$ and $g:={\tilde A_2}^{-1/2}d$ is the (reduced) solution of the equation $d=\tilde {A_2}^{1/2}z,$ see \cite{Douglas}. Then
$$g^*g=d^*\tilde {A_2}^{-1}d=\Vert \tilde{A_2}^{-1/2} \frac{\tilde a_2}{2} \Vert^2.$$
Hence, $\tilde b_2+\mu_3=g^*g+t$ with $t:=\tilde b_2+\mu_3-\Vert {\tilde A_2}^{-1/2} \frac{\tilde a_2}{2} \Vert^2>0.$ Then, by \cite[Theorem 3]{anderson1975shorted}, $Z=\hat{A}_2+\mu_3 \hat{A}_3\geq 0.$ Also, 
$z:=\tilde b_2+\mu_3-g^*g=\tilde b_2+\mu_3-d^*{\tilde A_2}^{-1}d=t>0.$
Then $z^{-1}=(\tilde b_2+\mu_3-d^*{\tilde A_2}^{-1}d)^{-1} \in \mathbb R$ and, it can be checked that
$$Z^{-1}=\left(\begin{array}{cc}
{\tilde A_2}^{-1}+{\tilde A_2}^{-1}dz^{-1}d^*{\tilde A_2}^{-1}& -{\tilde A_2}^{-1}dz^{-1}\\
-z^{-1}d^*{\tilde A_2}^{-1}  & z^{-1}
\end{array}\right) \in L(\tilde \HH).$$
Therefore $Z=\hat{A}_2+\mu_3 \hat{A}_3 \in GL(\tilde \HH)^+.$

Set $F:=(f_1,f_2,f_3)$. Then, by Theorem \ref{propclosed}, $F(\tilde \HH)$ is closed. Then
$$
F(\tilde\HH)\cap\{(a,b,c)\in\mathbb R^3: c=1\}=F(\HH\times \{-1,1\})=F(\HH \times \{1\}),$$ where we used that $F(x,-1)=F(-x,1)$ for every $x \in \HH.$
Therefore, the set $F(\HH\times \{1\})$ is closed because the set $\{(a,b,c)\in\mathbb R^3: c=1\}$ is closed.
Finally, note that the projection of $F(\HH\times \{1\})$ to $\mathbb R^2$ is exactly $\Phi(\HH)$.
\qed
\end{proof}

\section{Applications}
Let $\HH$ be a real Hilbert space, $A \in L(\HH),$ $b\in \HH$ and $\rho>0.$ Consider the function $G: \HH \rightarrow \mathbb{R}$ given by 
$$G(x):=\frac{\Vert Ax-b \Vert^2}{1+\Vert x \Vert^2}+\rho \Vert x \Vert^2.$$
In \cite[Proposition 4.13]{TLS}, we apply the following version of an S-lemma in order to give a method for finding the infimum of $G.$ In that work, we give a characterization of such infimum and we present sufficient conditions for the existence of solution of a related total least squares problem.

\begin{lemma}\label{new S-lemma} Let $\HH$ be a real Hilbert space. Let $\phi_j(x)=\langle A_jx,x\rangle+\langle x,a_j\rangle+b_j$, with $A_j\in L(\HH)$, $a_j\in\HH$, $b_j\in\mathbb R$, $j=1,2.$
Suppose that $\mu_1 A_1+\mu_2 A_2>0$  for some $\mu_1,\mu_2\in\mathbb R$.
Let
$F:\mathbb R^2\to\mathbb R$ be defined as
$$
F(z)=\langle \Theta z,z\rangle+\langle z,v\rangle -t,
$$	
where $\Theta$ is a real symmetric nonnegative $2\times 2$ matrix, $v=(v_1,v_2)\in\mathbb R^2$ and $t\in\mathbb R$. Then the following are equivalent:
\begin{itemize}
	\item[(i)]  $F(\phi_1(x),\phi_2(x))\ge 0$ for every $x\in\HH$.
	
	\item[(ii)] There exist $\alpha,\beta\in \mathbb R$ such that for every $x\in \HH$ and every $z=(z_1,z_2)\in\mathbb R^2$,
	$$
	F(z)+\alpha(\phi_1(x)-z_1)+\beta (\phi_2(x)-z_2)\ge0.
	$$
\end{itemize}
Moreover, 
\begin{enumerate}
    \item if $A_1$ is not bounded below and $A_2\in GL^+(\HH)$ then $\beta \ge0.$ Likewise, if $A_2$ is not bounded below and $A_1\in GL^+(\HH)$ then  $\alpha \ge0;$
    \item if either $\Theta=\left(\begin{array}{cc}
	0			 & 0  \\
	0		 &  \rho
	\end{array}\right)$ and $v_1>0$,
or $\Theta=\left(\begin{array}{cc}
	\rho			 & 0  \\
	0		 &  0
	\end{array}\right)$ and $v_2<0$ then $\alpha\ge 0$.
\end{enumerate}
 \end{lemma}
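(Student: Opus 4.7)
The plan is as follows. The easy direction (ii) $\Rightarrow$ (i) follows by substituting $z=(\phi_1(x),\phi_2(x))$ into the inequality in (ii): the last two terms cancel and we are left with $F(\phi_1(x),\phi_2(x))\ge 0$.

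For (i) $\Rightarrow$ (ii), I would run a Hahn--Banach separation argument in $\mathbb R^3$. The two convexity inputs are: first, $\Phi(\HH):=\{(\phi_1(x),\phi_2(x)):x\in\HH\}$ is convex by Proposition \ref{Polyakdiminfinita}, whose hypothesis $\mu_1A_1+\mu_2A_2>0$ is exactly ours; second, since $\Theta\ge 0$, the function $(z,r)\mapsto F(z)+r$ is convex and continuous, so the strict sublevel set $E:=\{(z,r)\in\mathbb R^2\times\mathbb R:F(z)+r<0\}$ is open and convex. Setting $B:=\Phi(\HH)\times\{0\}$, condition (i) says exactly that $E\cap B=\emptyset$. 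Open/convex separation yields $(\alpha,\beta,\gamma)\ne 0$ and $c\in\mathbb R$ with $\alpha z_1+\beta z_2+\gamma r\le c$ on $E$ and $\alpha w_1+\beta w_2\ge c$ on $\Phi(\HH)$. Letting $r\to-\infty$ inside $E$ forces $\gamma\ge 0$; the case $\gamma=0$ would make the first inequality hold for every $z\in\mathbb R^2$, which would force $\alpha=\beta=0$, a contradiction. After rescaling to $\gamma=1$ and letting $r\nearrow -F(z)$, I obtain $F(z)-\alpha z_1-\beta z_2+c\ge 0$ for every $z$; adding this to $\alpha\phi_1(x)+\beta\phi_2(x)\ge c$ yields precisely (ii).

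For part (1) of the ``Moreover'', any $(\alpha,\beta)$ satisfying (ii) makes $\inf_x[\alpha\phi_1(x)+\beta\phi_2(x)]$ finite; testing with $x=tu$ and sending $t\to\infty$ gives $\alpha A_1+\beta A_2\ge 0$. If $A_1$ is not bounded below, then $0$ lies in the approximate point spectrum of the selfadjoint operator $A_1$, so there is a unit sequence $(y_n)$ with $\|A_1y_n\|\to 0$, and in particular $\langle A_1y_n,y_n\rangle\to 0$. Evaluating $\langle(\alpha A_1+\beta A_2)y_n,y_n\rangle\ge 0$, passing to a subsequence where $\langle A_2y_n,y_n\rangle$ converges to some $M\ge\alpha_2>0$ (using $A_2\ge\alpha_2 I$), and taking limits gives $\beta M\ge 0$, hence $\beta\ge 0$. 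The second assertion in (1) is symmetric.

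For part (2) of the ``Moreover'', the key is that the degenerate $\Theta$ forces a linear constraint on $(\alpha,\beta)$. The inequality $F(z)-\alpha z_1-\beta z_2+c\ge 0$ must hold for all $z\in\mathbb R^2$. In the first case ($\Theta=\mathrm{diag}(0,\rho)$, $v_1>0$) the left-hand side is affine in $z_1$, so the coefficient $v_1-\alpha$ must vanish, giving $\alpha=v_1>0$. In the second case ($\Theta=\mathrm{diag}(\rho,0)$, $v_2<0$), the analogous coefficient-matching forces $\beta=v_2<0$; then the operator inequality $\alpha A_1+\beta A_2\ge 0$ combined with $\beta<0$ and the existence of the interior point $(\mu_1,\mu_2)$ of the convex cone $\{(\alpha,\beta):\alpha A_1+\beta A_2\ge 0\}$ (provided by $\mu_1A_1+\mu_2A_2>0$) pins down $\alpha\ge 0$. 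I expect the main obstacle to be precisely this last sign argument: unlike case 1 of (2), where $\alpha$ is pinned to the positive value $v_1$ by a direct coefficient match, case 2 only pins down $\beta$ at a negative value and leaves $\alpha$ with some freedom, so recovering $\alpha\ge 0$ requires a careful interplay between the feasibility cone of multipliers and the strict positivity assumption.
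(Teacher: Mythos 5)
Your proof of the equivalence (i)$\Leftrightarrow$(ii) is correct and takes a genuinely different route from the paper: the paper separates the two convex sets $D=\Phi(\HH)$ and $\{z:F(z)<0\}$ in $\mathbb R^2$ and then invokes a Farkas-type lemma to produce the nonnegative multiplier $\lambda$, whereas you perform a single separation in $\mathbb R^3$ between $\Phi(\HH)\times\{0\}$ and the open convex set $\{(z,r):F(z)+r<0\}$ and read off the multipliers directly; both arguments rest on the same convexity input, Proposition \ref{Polyakdiminfinita}. Your treatment of item (1) and of the first case of item (2) is sound and in fact gives more than the paper's proof: from (ii) you extract the operator inequality $\alpha A_1+\beta A_2\ge 0$, and in case (2) with $\Theta=\mathrm{diag}(0,\rho)$, $v_1>0$ you pin down $\alpha=v_1>0$, and these conclusions hold for \emph{every} admissible pair $(\alpha,\beta)$, while the paper only argues about the pair produced by its separation (via the position of $D$, resp.\ of the parabola bounding $\{F<0\}$, relative to the separating line). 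One caveat in (1): the step ``$A_1$ not bounded below $\Rightarrow$ $0$ is in the approximate point spectrum of $A_1$'' is not correct (take $A_1=-I$: it fails \eqref{bounded below} but is invertible); what your argument actually needs, and what the paper's proof also tacitly uses, is only that there exist unit vectors $y_n$ with $\langle A_1y_n,y_n\rangle\to 0$, i.e.\ that $0$ lies in the closure of the numerical range of $A_1$.

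The genuine gap is the second case of item (2). You correctly force $\beta=v_2<0$ by coefficient matching, but the concluding claim --- that $\alpha A_1+\beta A_2\ge 0$ together with $\beta<0$ and the existence of a positive definite combination $\mu_1A_1+\mu_2A_2>0$ ``pins down'' $\alpha\ge 0$ --- is false as a general implication: for $A_1=A_2=-I$ one has $\mu_1A_1+\mu_2A_2>0$ with $\mu_1=\mu_2=-1$, yet $\alpha A_1+\beta A_2=-(\alpha+\beta)I\ge 0$ admits $\alpha<0,\beta<0$. Worse, no argument at your level of generality (``every admissible pair'') can work: with $A_1=-I$, $A_2=I$, $a_j=0$, $b_j=0$, $F(z)=z_1^2-z_1-z_2+1$ (so $\Theta=\mathrm{diag}(1,0)$, $v_2=-1<0$), condition (i) holds, but every pair satisfying (ii) has $\beta=-1$ and $\alpha\le -1$, and one checks that every separating line also has $\alpha<0$; so this case genuinely requires extra information beyond the cone $\{(\alpha,\beta):\alpha A_1+\beta A_2\ge0\}$ (for instance the first separation inequality locating $D$, or positivity assumptions on $A_1,A_2$ of the kind appearing in (1)). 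For comparison, the paper proves (2) only in the first case, by showing that $\{F<0\}$, a region unbounded in the direction $z_1\to-\infty$, cannot lie in the half-plane $\{\alpha z_1+\beta z_2+\gamma<0\}$ when $\alpha<0$, and dismisses the second case with ``the other case follows similarly''; the step you are missing is exactly the one the paper leaves implicit, and your cone argument does not supply it.
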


\medskip

In order to prove the above result  we need the following $\mathbb R^2$ version of Farkas' Theorem (see for example  \cite{polik2007survey}, \cite{de2003nonlinear}, \cite[section 6.10]{stoer2012convexity}):
	
	Let $F, h:\mathbb R^2\to\mathbb R$ be convex functions and suppose that  there exists $\bar{x}\in \mathbb{R}^2$ such that $h(\bar{x})\leq 0$. Then 
	$F(z)\ge 0$ for every $z\in\mathbb R^2$ such that $h(z)\le 0$ if and only if there exists $\lambda\ge 0$ such that $F(z)+\lambda h(z)\ge0$ for every  $z\in\mathbb R^2$.
\smallskip

\begin{proof}[of Lemma \ref{new S-lemma}]
	By Proposition \ref{Polyakdiminfinita}, $D:=\{(\phi_1(x),\phi_2(x))\,:\, x\in \HH\}$ is convex.
	Since $\Theta\ge0$, the set $\{z\in \mathbb R^2:\,F(z)<0\}$   is also convex.
    Moreover, by (i), $D\cap\{z\,:\,F(z)<0\}=\emptyset.$  Thus, we can separate these sets by a hyperplane in $\mathbb R^2$, i.e., there exist $\alpha,\beta,\gamma\in\mathbb R$ such that 
	\begin{align}
	\label{separacion convexos s-lemma1} z\in D &\Rightarrow \alpha z_1+\beta z_2+\gamma\ge 0   \qquad\textrm{ and},\\
	\nonumber F(z)<0 &\Rightarrow \alpha z_1+\beta z_2+\gamma< 0 .
	\end{align}
	Thus, $F(z)\ge 0 $ for every $z=(z_1,z_2)$ such that $ \alpha z_1+\beta z_2+\gamma\ge 0 $. 
	By the Farkas' Theorem, there exists $\lambda\ge0$ such that for every $z\in\mathbb R^2$, 
	$$
	F(z)-\lambda(\alpha z_1+\beta z_2+\gamma)\ge0.
	$$
	From this inequality and \eqref{separacion convexos s-lemma1} we conclude that,
	$$
	F(z)+\lambda\alpha(\phi_1(x)-z_1)+\lambda\beta(\phi_2(x)-z_2)=F(z)-\lambda(\alpha z_1+\beta z_2+\gamma)+\lambda(\alpha \phi_1(x)+\beta \phi_2(x)+\gamma)\ge 0,
	$$
	for every $z=(z_1,z_2)\in\mathbb R^2$ and every $x\in\HH$. The converse is straightforward.
	
	Moreover, 
		\begin{enumerate}
			\item Suppose that $A_1$ is not bounded below, $A_2\in GL^+(\HH)$ and $\beta<0$.
			 By \eqref{separacion convexos s-lemma1}, it holds that $z_2 \leq -\frac{\alpha}{\beta}z_1-\frac{\gamma}{\beta}$, for every $z\in D$.
			Then the set $D$ must be below  a line with finite slope. We will now prove that this is not possible.
			 Let $0<\varepsilon<\delta\frac{|\beta|}{|\alpha|}$, where $\delta>0$ is such that 
			 $\langle A_2x,x\rangle\ge\delta \|x\|^2$ for every $x\in\HH.$
			
			 Since $A_1$ is not bounded below,  given   $r>0$,  there exists $x\in \HH$ such that $\|x \|=r$ and $|\langle A_1x,x\rangle |<\epsilon r^2$. Then
			\begin{align*}
			    \delta r^2-\|a_2\|r-|b_2|\le\phi_2(x)&\le -\frac{\alpha}{\beta}\phi_1(x)-\frac{\gamma}{\beta}\\
			    &<\frac{|\alpha|}{|\beta|}(\varepsilon r^2+\|a_1\|r+b_1)+\frac{|\gamma|}{|\beta|}.
			\end{align*}
		Thus, for every $r$ we should have that 
		$$
		(\delta-\frac{|\alpha|}{|\beta|}\varepsilon) r^2-(\frac{|\alpha|}{|\beta|}\|a_1\|+\|a_2\|)r-|b_2|-\frac{|\alpha|}{|\beta|}b_1-\frac{|\gamma|}{|\beta|}<0.
		$$
				This is a contradiction because $\delta-\frac{|\alpha|}{|\beta|}\varepsilon>0$.
			
%
				\item Suppose now that  $\Theta=\left(\begin{array}{cc}
	0			 & 0  \\
	0		 &  \rho
    	\end{array}\right)$, $v_1>0$ and  $\alpha <0$.  By \eqref{separacion convexos s-lemma1}, it holds that $z_1 >-\frac{\beta}{\alpha}z_2 -\frac{\gamma}{\alpha}$, for every $z\in \mathbb R^2$ such that $ F(z)<0$.
			
			Since $\Theta=\left(\begin{array}{cc}
			0			 & 0  \\
			0		 &  \rho
			\end{array}\right)$ and $v_1>0$, then $F(z)=\rho z_2^2+v_1z_1+v_2z_2-t$. Therefore, $\{z:F(z)<0\}$ is the convex set determined by the parabola $z_1=-\frac{\rho}{v_1}z_2^2-\frac{v_2}{v_1}z_2+\frac{t}{v_1}$; so that it can not be on the right side of a straight line (for example, if $z_1<\min\{-\frac{\gamma}{\alpha},\frac{t}{v_1}\}$, then $(z_1,0)\in\{z : F(z)<0\}$ but does not satisfies \eqref{separacion convexos s-lemma1}).
		
			The other case follows similarly.
			\end{enumerate}	
			\qed
\end{proof}

\subsection{S-Procedure}
In \cite{polyak1998convexity}, Polyak gave several applications of his convexity theorem. Most of them can be extended to infinite dimensional spaces using our result. In this final subsection we briefly present as an example one of these extensions. Let $\HH$ be a real Hilbert space and $A_0, A_1, A_2 \in L(\HH).$ Given two quadratic forms
$$f_i(x)=\PI{A_ix}{x}, \  \ i=1,2  $$ 
in $\HH$ and $\alpha_1, \alpha_2 \in \mathbb{R}$; the problem is to characterize all $f_0(x)=\PI{A_0x}{x}, \alpha_0 \in \mathbb{R}$ such that
\begin{equation} \label{SP1}
f_0(x) \leq \alpha_0 \mbox{ for every } x \in \HH \mbox{ such that }  f_1(x) \leq \alpha_1, \ f_2(x) \leq \alpha_2.
\end{equation}

\begin{proposition} \label{teoSPro1} Let $\HH$ be a real Hilbert space, $3\le dim(\HH)\le \infty$. Suppose that there exist $\mu_1, \mu_2 \in \mathbb{R}, x^0 \in \HH$ such that
\begin{align}
\label{SP09}  & \mu_1A_1+\mu_2A_2 >0,\\
\label{SP10}  & f_1(x^0) < \alpha_1, \ f_2(x^0) < \alpha_2.
\end{align}
Then, \eqref{SP1} holds if and only if there exist $\tau_1\geq 0, \ \tau_2\geq 0$ such that
\begin{align} 
\label{SP11} & A_0\leq\tau_1A_1+\tau_2A_2,\\
\label{SP12} & \alpha_0\geq \tau_1\alpha_1+\tau_2 \alpha_2.
\end{align}
\end{proposition}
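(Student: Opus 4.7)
The plan is to follow the standard convex-separation route: the implication $(\Leftarrow)$ is an immediate nonnegative combination, while $(\Rightarrow)$ is obtained by separating two disjoint convex subsets of $\mathbb R^3$, using Theorem \ref{convexidad Polyak1} for the convexity of the joint image and the Slater condition \eqref{SP10} to exclude a degenerate separator. For $(\Leftarrow)$, if $\tau_1,\tau_2\geq 0$ satisfy \eqref{SP11}--\eqref{SP12} and $f_i(x)\leq \alpha_i$ for $i=1,2$, then $f_0(x)\leq \tau_1 f_1(x)+\tau_2 f_2(x)\leq \tau_1\alpha_1+\tau_2\alpha_2\leq \alpha_0$.

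For $(\Rightarrow)$, set $F(x):=(f_0(x),f_1(x),f_2(x))$. Rewriting \eqref{SP09} as $0\cdot A_0+\mu_1 A_1+\mu_2 A_2>0$, Theorem \ref{convexidad Polyak1} yields that $F(\HH)$ is a convex cone in $\mathbb R^3$, so $C:=F(\HH)-(\alpha_0,\alpha_1,\alpha_2)$ is convex. Condition \eqref{SP1} is exactly the assertion that $C$ is disjoint from the open convex set $D:=\{(s_0,s_1,s_2)\in\mathbb R^3:\ s_0>0,\ s_1<0,\ s_2<0\}$. A finite-dimensional separation then furnishes $\eta=(\eta_0,\eta_1,\eta_2)\ne 0$ and $\gamma\in\mathbb R$ with $\eta\cdot s\geq\gamma$ on $C$ and $\eta\cdot s\leq\gamma$ on $\overline D$. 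Sending each coordinate to $\pm\infty$ inside $\overline D$ forces $\eta_0\leq 0$, $\eta_1\geq 0$, $\eta_2\geq 0$, and evaluating at $s=0\in\overline D$ gives $\gamma\geq 0$. On $C$ the separation then reads
$$\eta_0(f_0(x)-\alpha_0)+\eta_1(f_1(x)-\alpha_1)+\eta_2(f_2(x)-\alpha_2)\geq\gamma\geq 0 \quad\text{for every } x\in\HH.$$

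The main obstacle is excluding the degenerate case $\eta_0=0$, and this is where Slater's condition enters. If $\eta_0=0$, plugging $x=x^0$ into the last inequality gives $\eta_1(f_1(x^0)-\alpha_1)+\eta_2(f_2(x^0)-\alpha_2)\geq 0$; but $f_i(x^0)-\alpha_i<0$ combined with $\eta_1,\eta_2\geq 0$ forces the left side to be nonpositive, and strictly negative unless $\eta_1=\eta_2=0$, contradicting $\eta\ne 0$. Hence $\eta_0<0$; dividing by $-\eta_0$ and defining $\tau_i:=\eta_i/(-\eta_0)\geq 0$ for $i=1,2$ turns the inequality into
$$\langle (A_0-\tau_1 A_1-\tau_2 A_2)x,x\rangle\leq \alpha_0-\tau_1\alpha_1-\tau_2\alpha_2 \quad\text{for every } x\in\HH.$$
Replacing $x$ by $tx$ and letting $t\to\infty$ forces $A_0-\tau_1A_1-\tau_2A_2\leq 0$, which is \eqref{SP11}; and evaluating at $x=0$ yields \eqref{SP12}.
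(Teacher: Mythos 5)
Your proof is correct and follows essentially the same route as the paper, which establishes convexity of the joint image via Theorem \ref{convexidad Polyak1} and then invokes the separation argument of Polyak's Theorem 4.1 — the argument you have written out explicitly (separating the image from the open orthant-type set, using the Slater point \eqref{SP10} to rule out $\eta_0=0$, then homogenizing). The only nitpick is that \eqref{SP1} is not \emph{exactly} equivalent to the disjointness $C\cap D=\emptyset$ (it only implies it, because $D$ uses strict inequalities), but that implication is all your argument needs.
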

\begin{proof}
Consider
$$F:=\{f(x): x \in \HH\}, \ \ f(x):=(f_0(x),f_1(x),f_2(x)).$$
Then,  all the assumptions of Theorem \ref{convexidad Polyak1} hold; hence $F$ is convex. Then, the results follows using the same arguments as those found in the proof of \cite[Theorem 4.1]{polyak1998convexity}.
\end{proof}

Examples 4.1, 4.2 and 4.3 of \cite{polyak1998convexity} show that all the conditions of Theorem \ref{teoSPro1} are necessary.
 
\bigskip

A version of Proposition \ref{teoSPro1} where one of the inequalities $f_i(x) \leq \alpha_i$ is replaced by an equality can be proven with an extra condition. See also \cite[Proposition 4.1]{polyak1998convexity}.

\begin{proposition}
\label{propSPro2} Let $\HH$ be a real Hilbert space, $3\le dim(\HH)\le \infty$ and $\alpha_2 \not =0.$ Suppose that there exist $\mu_1, \mu_2 \in \mathbb{R}$ satisfying \eqref{SP09}, $x^0 \in \HH$ such that
\begin{align}
& f_1(x^0) < \alpha_1, \ f_2(x^0) = \alpha_2.
\end{align}
Then, 
$$f_0(x) \leq \alpha_0 \mbox{ for every } x \in \HH \mbox{ such that }  f_1(x) \leq \alpha_1, \ f_2(x)= \alpha_2,$$
if and only if there exists $\tau_1\geq 0$ such that \eqref{SP11} and \eqref{SP12} hold.
\end{proposition}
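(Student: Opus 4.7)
The plan is to repeat the scheme of Proposition \ref{teoSPro1} (based on \cite[Theorem~4.1]{polyak1998convexity}), adapting it to the equality constraint as in \cite[Proposition~4.1]{polyak1998convexity}, and invoking the hypothesis $\alpha_2\ne 0$ at the end to rule out a degenerate multiplier.

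Sufficiency is immediate: if $\tau_1\ge 0$ and $\tau_2\in\mathbb R$ satisfy \eqref{SP11}--\eqref{SP12}, then for every $x\in\HH$ with $f_1(x)\le\alpha_1$ and $f_2(x)=\alpha_2$,
$$
f_0(x)\le\tau_1 f_1(x)+\tau_2 f_2(x)\le\tau_1\alpha_1+\tau_2\alpha_2\le\alpha_0,
$$
since $\tau_1(f_1(x)-\alpha_1)\le 0$ and $\tau_2(f_2(x)-\alpha_2)=0$.

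For necessity, Theorem \ref{convexidad Polyak1} together with \eqref{SP09} makes $F:=\{(f_0(x),f_1(x),f_2(x)):x\in\HH\}$ a convex cone in $\mathbb R^3$, and the hypothesis on $f_0$ says that $F$ is disjoint from the convex set $K:=\{(y_0,y_1,y_2)\in\mathbb R^3: y_0>\alpha_0,\ y_1\le\alpha_1,\ y_2=\alpha_2\}$. Finite-dimensional separation supplies $(\tau_0,\tau_1,\tau_2)\ne 0$ and $c\in\mathbb R$ with $\tau_0 z_0-\tau_1 z_1-\tau_2 z_2\le c$ on $F$ and $\tau_0 y_0-\tau_1 y_1-\tau_2 y_2\ge c$ on $K$. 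Letting $y_0\to +\infty$ and $y_1\to -\infty$ inside $K$ forces $\tau_0\ge 0$ and $\tau_1\ge 0$, while the infimum of $\tau_0 y_0-\tau_1 y_1-\tau_2 y_2$ over $K$ equals $\tau_0\alpha_0-\tau_1\alpha_1-\tau_2\alpha_2$, giving $\tau_0\alpha_0-\tau_1\alpha_1-\tau_2\alpha_2\ge c$. The $2$-homogeneity of $f_0,f_1,f_2$ (i.e.\ the cone structure of $F$) lets us substitute $tx$ for $x$ in the inequality on $F$: letting $|t|\to\infty$ yields $\tau_0 A_0\le\tau_1 A_1+\tau_2 A_2$, and $t=0$ yields $c\ge 0$; hence also $\tau_0\alpha_0\ge\tau_1\alpha_1+\tau_2\alpha_2$.

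The main obstacle is excluding $\tau_0=0$, since only then can we divide through to obtain \eqref{SP11}--\eqref{SP12}. Suppose $\tau_0=0$. The two halves of the separation then collapse to $\tau_1(f_1(x)-\alpha_1)+\tau_2(f_2(x)-\alpha_2)\ge 0$ for every $x\in\HH$. Setting $x=x^0$ reduces this to $\tau_1(f_1(x^0)-\alpha_1)\ge 0$, and since $\tau_1\ge 0$ and $f_1(x^0)-\alpha_1<0$, necessarily $\tau_1=0$; non-triviality of the separator then forces $\tau_2\ne 0$ and $\tau_2(f_2(x)-\alpha_2)\ge 0$ for every $x\in\HH$. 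Evaluating at $x=0$ gives $-\tau_2\alpha_2\ge 0$, and evaluating at $x=2x^0$ (where $f_2(2x^0)=4\alpha_2$) gives $3\tau_2\alpha_2\ge 0$; together these force $\tau_2\alpha_2=0$, contradicting $\alpha_2\ne 0$. Therefore $\tau_0>0$, and replacing $\tau_i$ by $\tau_i/\tau_0$ for $i=1,2$ completes the proof.
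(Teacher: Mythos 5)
Your proof is correct, and it is essentially the argument the paper intends: the paper itself omits the proof and defers to Polyak's Proposition 4.1, whose scheme you reproduce — convexity of the joint image $F$ via Theorem \ref{convexidad Polyak1}, separation of $F$ from the convex set $K$, homogeneity to pass to \eqref{SP11}--\eqref{SP12}, and the Slater-type point $x^0$ together with $\alpha_2\neq 0$ (evaluated at $0$ and $2x^0$) to exclude the degenerate multiplier $\tau_0=0$. The only difference is that you spell out in full the details the paper leaves to the citation.
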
 


\section{Conclusions}
An important result due to Polyak \cite{polyak1998convexity} states that the joint image of two non-homogeneous quadratic forms defined on $\mathbb R^n$ is a convex closed set of $\mathbb R^2$.
This class of result has many applications, for instance to S-lemma type results.

In this article we extend the convexity part of Polyak's 
result to an arbitrary infinite dimensional real Hilbert space $\HH$, see Theorem \ref{convexidad Polyak1} and Proposition  \ref{Polyakdiminfinita}.

We present examples involving diagonal operators showing that the closedness part of Polyak's theorem does not hold on infinite dimensional spaces for quadratic forms determined by (compact or invertible) positive definite operators.
Moreover, we show that if $A_1$ is a compact operator on $\HH$ with 0 in its numerical range and $A_2$ is a positive and invertible definite operator, then the joint image of two non necessarily homogeneous quadratic forms determined by $A_1$ and $A_2$ is closed, see Proposition \ref{propclosednohomogeneo}.

For further research, it would be interesting to find necessary and sufficient conditions that allow to prove the closedness part of Polyak's theorem in the infinite dimensional setting.

\section*{Acknowledgments}
Maximiliano Contino was supported by CONICET PIP 0168. Guillermina Fongi was supported by PICT 2017 0883. Santiago Muro was supported by ANPCyT-PICT 2018-04250 and CONICET-PIP 11220130100329CO.

\goodbreak

\end{document}